\def\Mat{\mathop{\rm Mat}\nolimits}
\def\Integer{\mathbb{Z}}
\def\bbG{\mathbb{G}}
\def\bbP{\mathbb{P}}
\def\bbR{\mathbb{R}}
\def\P{\mathbb{P}}
\def\ee{\mathfrak{e}}
\def\SP<#1>{\langle#1\rangle}
\theoremstyle{plain}
\newtheorem{theorem}{Theorem}
\newtheorem{lemma}{Lemma}
\theoremstyle{definition}
\newtheorem{definition}{Definition}
\theoremstyle{remark}
\newtheorem{example}{Example}
\title{Integrable discrete nets in Grassmannians
}
\author{Vsevolod E. Adler$^{1,2}$, Alexander I. Bobenko$^2$, Yuri B. Suris$^3$}
\date{\empty}
\begin{document} \maketitle

\footnotetext[1]{L.D. Landau Institute for Theoretical Physics, 1a Semenov pr.,
 142432 Chernogolovka, Russia. E-mail: adler@itp.ac.ru}
\footnotetext[2]{Institut f\"ur Mathematik, Technische
 Universit\"at Berlin, Str. des 17. Juni 135, 10623 Berlin, Germany.
 E-mail: bobenko@math.tu-berlin.de}
\footnotetext[3]{Zentrum Mathematik, Technische Universit\"at
 M\"unchen, Boltzmannstr. 3, 85748 Garching, Germany. E-mail:
 suris@ma.tum.de}

\begin{abstract}

We consider discrete nets in Grassmannians $\bbG^{d}_{r}$ which
generalize Q-nets (maps $\Integer^N\to\bbP^d$ with planar
elementary quadrilaterals) and Darboux nets ($\bbP^d$-valued maps
defined on the edges of $\Integer^N$ such that quadruples of
points corresponding to elementary squares are all collinear). We
give a geometric proof of integrability (multidimensional
consistency) of these novel nets, and show that they are
analytically described by the noncommutative discrete Darboux
system.

\medskip
Key words: discrete differential geometry, multidimensional
consistency, Grassmannian, noncommutative Darboux system
\medskip

Mathematics Subject Classification: 15A03, 37K25
\end{abstract}

\section{Introduction}\label{s:intro}

One of the central notions in discrete differential geometry
constitute discrete nets, that is, maps $\Integer^3\to\P^d$
specified by certain geometric properties. Their study was
initiated by R.~Sauer \cite{Sauer1}, while their appearance in the
modern theory of integrable systems is connected with the work of
A.~Bobenko and U.~Pinkall
\cite{Bobenko_Pinkall_1996a,Bobenko_Pinkall_1996b} and of
A.~Doliwa and P.~Santini \cite{Doliwa_Santini_1997}. A systematic
exposition of discrete differential geometry, including detailed
bibliographical and historical remarks, is given in the monograph
\cite{Bobenko_Suris_2005} by two of the present authors. In many
aspects, the discrete differential geometry of parametrized
surfaces and coordinate systems turns out to be more transparent
and fundamental than the classical (smooth) differential geometry,
since the transformations of discrete surfaces possess the same
geometric properties and therefore are described by the same
equations as the surfaces themselves. This leads to the notion of
multidimensional consistency which can be seen as the fundamental
geometric definition of integrability in the discrete context,
which yields standard integrability structures of both discrete
and continuous systems, such as B\"acklund and Darboux
transformations, zero curvature representations, hierarchies of
commuting flows etc.

In this note we present a generalization of two classes of
multidimensional nets, Q-nets (or discrete conjugate nets) and
Darboux nets, to the maps with values in the Grassmannian
$\bbG^{d}_{r}$ instead of $\P^d$. The basic idea underlying this
work goes back to H.~Grassmann and J.~Pl\"ucker and consists in
regarding more complicated objects than just points (like lines,
spheres, multidimensional planes, contact elements etc.) as
elementary objects of certain geometries. Such objects are then
represented as points belonging to some auxiliary projective
spaces or to certain varieties in these spaces. In the framework
of discrete differential geometry, one can assign such objects to
the sites of the lattice $\Integer^N$ and impose certain geometric
conditions to characterize interesting classes of multidimensional
nets. Several such classes have been introduced in the literature,
for instance:
\begin{itemize}
 \item discrete line congruences \cite{Doliwa_Santini_Manas}, which
are nets in the set of lines in $\bbP^d$ subject to the condition
that any two neighboring lines intersect (are coplanar);
 \item discrete W-congruences \cite{Doliwa_2001}, which are nets in
the set of lines in $\bbP^3$ such that four lines corresponding to
the vertices of every elementary square of $\Integer^N$ belong to
a regulus. If one represents the lines in $\bbP^3$ by points of
the Pl\"ucker quadric in $\bbP(\bbR^{4,2})$, then this condition
is equivalent to the planarity of elementary quadrilaterals;
 \item discrete R-congruences of spheres \cite{Doliwa_2001b, Bobenko_Suris_2007a},
which are nets in the set of oriented spheres in $\bbR^3$, which
in the framework of Lie geometry are represented by points of the
Lie quadric in $\bbP(\bbR^{3,3})$. Again, the defining condition
is the planarity of all elementary quadrilaterals of the net;
 \item principal contact element nets \cite{Bobenko_Suris_2007a},
which are nets in the set of contact elements in $\bbR^3$ such
that any two neighboring contact elements share a common oriented
sphere. In the framework of Lie geometry such nets are represented
by isotropic line congruences.
\end{itemize}
Again, one can find detailed information and additional
bibliographical notes about these nets in
\cite{Bobenko_Suris_2005}.

In the present work, we study two related classes of
multidimensional nets in Grassmannians $\bbG^d_r$ which generalize
Q-nets (nets in $\bbP^d$ with planar elementary quadrilaterals)
and the so called Darboux nets introduced in \cite{Schief_2003}.

It turns out that Grassmannian Q-nets can be analytically
described by a noncommutative version of the so called discrete
Darboux system which was introduced, without a geometric
interpretation, in \cite{Bogdanov_Konopelchenko_1995}. Our present
investigations provide also a geometric meaning for the abstract
Q-nets in a projective space over a noncommutative ring,
considered in \cite{Doliwa_2008}. More precisely, we demonstrate
that equations of abstract Q-nets in a projective space over the
matrix ring $\Mat(r+1,r+1)$ can be interpreted as the analytical
description of the Grassmannian Q-nets in the suitable
parametrization. The fact that the equations of Q-nets are
considered over a ring rather than over a field is not very
essential in this context, since the very notion of Q-nets is
related to subspaces in general position, and an accident
degeneration of some coefficients is treated as a singularity of
the discrete mapping. A much more important circumstance is the
noncommutativity of the matrix ring, which is equivalent to the
absence of Pappus theorem in the geometries over this ring.

The main results of the paper are Theorems \ref{th:Gr_3D},
\ref{th:Gr_4D} on the multidimensional consistency of Grassmannian
Q-nets and Theorem \ref{th:bij} on the equations for integrable
evolution of the discrete rotation coefficients.

\section{Multidimensional consistency of Grassmannian Q-nets}\label{s:def}

Recall that the Grassmannian $\bbG^{d}_{r}$ is defined as the
variety of $r$-planes in $\bbP^d$. It can be also described as the
variety of $(r+1)$-dimensional vector subspaces of the
$(d+1)$-dimensional vector space $\bbR^{d+1}$. In the latter
realization, the Grassmannian is alternatively denoted by
$G^{d+1}_{r+1}$. In what follows, the term ``dimension'' is used
in the projective sense.

\begin{definition}\label{def:Gr_lattice}{\bf (Grassmannian Q-net)}
A map $\Integer^N\to\bbG^{d}_{r}$, $N\ge 2$, $d>3r+2$, is called a
$N$-dimensional Grassmannian Q-net of rank $r$ if for every
elementary square of $\Integer^N$ the four $r$-planes in $\P^d$
corresponding to its vertices belong to some $(3r+2)$-plane.
\end{definition}

Note that three generic $r$-planes in $\P^d$ span a
$(3r+2)$-plane. Therefore, the meaning of Definition
\ref{def:Gr_lattice} is that if any three of the $r$-planes
corresponding to an elementary cell are chosen in general
position, then the last one belongs to the $(3r+2)$-plane spanned
by the first three.

\begin{example}
In the case of rank $r=0$ Definition \ref{def:Gr_lattice} requires
that four points corresponding to any elementary square of
$\Integer^N$ be coplanar. Thus we arrive at the notion of usual
Q-nets.
\end{example}
\begin{example}
Q-nets of rank $r=1$ are built of projective lines assigned to
vertices of the lattice $\Integer^N$, and Definition
\ref{def:Gr_lattice} requires that four lines corresponding to any
elementary square lie in a 5-plane.
\end{example}

The main properties of usual Q-nets which will be generalized now
to the Grassmannian context are the following (see a detailed
account in \cite{Bobenko_Suris_2005}):
\begin{itemize}
\item Within an elementary cube of $\Integer^3$, the points
assigned arbitrarily to any seven vertices determine the point
assigned to the eighth vertex uniquely. This can be expressed by
saying that Q-nets are described by a {\em discrete 3D system}.
\item This 3D system can be imposed on all 3D faces of an
elementary cube of any dimension $N\ge 4$. This property is called
the {\em multidimensional consistency} of the corresponding 3D
system and follows for any $N\ge 4$ from the 4D consistency. The
multidimensional consistency is treated as the integrability of
the corresponding 3D system.
\end{itemize}
These properties are illustrated on Fig. \ref{fig:3D}. On this
figure and everywhere else we use the notation $X_i$ for the shift
of the $i$-th argument of a function $X$ on $\Integer^N$, that is,
for $X(n_1,\dots,n_i+1,\dots)$. It is clear that the order of the
subscripts does not matter, $X_{ij}=X_{ji}$.

\begin{figure}
\setlength{\unitlength}{0.04em}
\begin{picture}(300,240)(-30,0)
 \put(0,0){\circle*{15}}    \put(150,0){\circle*{15}}
 \put(0,150){\circle*{15}}  \put(150,150){\circle*{15}}
 \put(50,50){\circle*{15}} \put(50,200){\circle*{15}}
 \put(200,50){\circle*{15}}
 \put(200,200){\circle{15}}
 \path(0,0)(150,0)       \path(0,0)(0,150)
 \path(150,0)(150,150)   \path(0,150)(150,150)
 \path(0,150)(50,200)    \path(150,150)(194,194)
 \path(50,200)(192.5,200)
 \path(200,192.5)(200,50) \path(200,50)(150,0)
 \dashline[+30]{10}(0,0)(50,50)
 \dashline[+30]{10}(50,50)(50,200)
 \dashline[+30]{10}(50,50)(200,50)
 \put(-30,-5){$X$}
 \put(-40,145){$X_3$} \put(215,45){$X_{12}$}
 \put(165,-5){$X_1$} \put(160,140){$X_{13}$}
 \put(10,50){$X_2$}  \put(5,205){$X_{23}$}
 \put(215,200){$X_{123}$}
\end{picture}
\hfill \setlength{\unitlength}{0.07em}
\begin{picture}(200,220)(-90,-90)

 \drawline(15,-20)(50,0)(50,47)
 \drawline(47,50)(0,50)(-35,30)(-35,-20)(15,-20)(15,30)(47,48.5)
 \drawline(15,30)(-35,30)
 \dashline{4}(-35,-20)(0,0)(0,50)\dashline{4}(0,0)(50,0)
 \drawline(30,-90)(131,-32)
 \drawline(135,-26)(135,116)
 \drawline(131,120)(-11,120)
 \drawline(-19,118)(-120,60)(-120,-90)(30,-90)(30,56)
 \drawline(34,62)(131,118)
 \drawline(26,60)(-120,60)
 \dashline{4}(-120,-90)(-15,-30)(-15,116)
 \dashline{4}(-15,-30)(131,-30)
  \dashline{2}(0,0)(-15,-30)
  \dashline{2}(-35,-20)(-120,-90)
  \dashline{2}(50,0)(132,-29)
  \dashline{2}(0,50)(-14,116)
  \dashline{2}(15,-20)(30,-90)
  \dashline{2}(-35,30)(-120,60)
  \dashline{2}(53,51.5)(131,117.5)
  \dashline{2}(15,30)(26,56)

  \put(-35,-20){\circle*{8}}       
  \put(15,-20){\circle*{8}}        
  \put(0,0){\circle*{8}}           
  \put(-35,30){\circle*{8}}        
  \put(50,0){\circle*{8}}          
  \put(0,50){\circle*{8}}          
  \put(15,30){\circle*{8}}         
  \put(50,50){\circle{8}}          

  \put(-120,-90){\circle*{10}}     
  \put(-15,-30){\circle*{9}}       
  \put(30,-90){\circle*{10}}       
  \put(-120,60){\circle*{10}}      
  \put(135,-30){\circle{10}}       
  \put(-15,120){\circle{10}}       
  \put(30,60){\circle{10}}         
  \put(130,115){$\Box$}            

  \put(-50,-15){$X$}
  \put(-3,-15){$X_1$}
  \put(-23,6){$X_2$}
  \put(-55,20){$X_3$}
  \put(58,3){$X_{12}$}
  \put(5,15){$X_{13}$}
  \put(-30,50){$X_{23}$}
  \put(58,44){$X_{123}$}
  \put(-150,-90){$X_4$}
  \put(0,-83){$X_{14}$}
  \put(-12,-43){$X_{24}$}
  \put(-150,55){$X_{34}$}
  \put(145,-28){$X_{124}$}
  \put(10,71){$X_{134}$}
  \put(-9,103){$X_{234}$}
  \put(140,103){$X_{1234}$}
\end{picture}
\hfill\strut \caption{The combinatorial meaning of a discrete 3D
system and its 4D consistency. Black circles mark the initial data
within an elementary cube; white circles mark the vertices
uniquely determined by the initial data; white square marks the
vertex where the consistency condition appears. In Grassmannian
Q-nets the vertices carry $r$-planes; two $r$-planes corresponding
to an edge span a $(2r+1)$-plane; four $r$-planes assigned to the
vertices of an elementary square span a $(3r+2)$-planes; eight
$r$-planes assigned to the vertices of an elementary 3-cube span a
$(4r+3)$-plane in $\P^d$.} \label{fig:3D}
\end{figure}
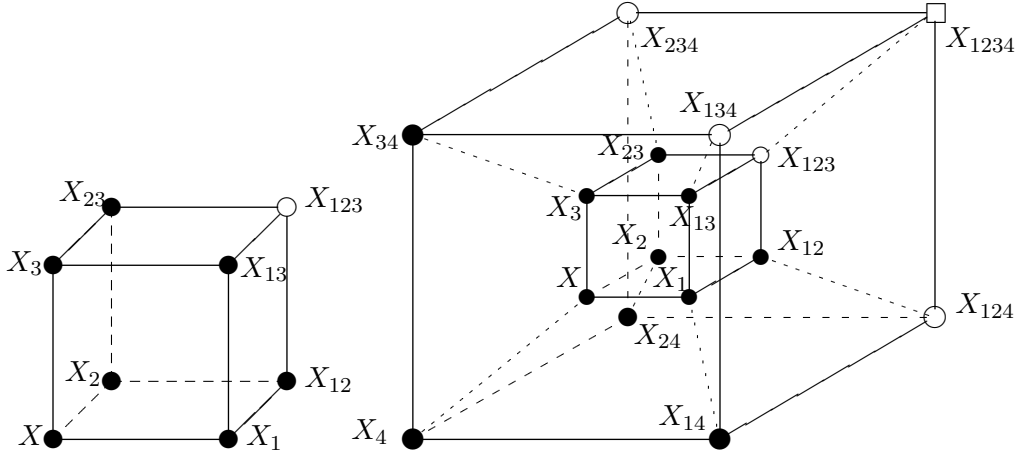

\begin{theorem}\label{th:Gr_3D}
{\bf (Grassmannian Q-nets are described by a discrete 3D system)}
Let seven $r$-planes $X$, $X_i$, $X_{ij}\in\bbG^d_r$, $1\le i\ne
j\le3$, $d\ge4r+3$, be given such that
\[
 \dim {\rm span}(X,X_i,X_j,X_{ij})=3r+2
\]
for each pair of indices $ij$, but with no other degeneracies.
Then there exists a unique $r$-plane $X_{123}\in\bbG^d_r$ such
that the conditions
\[
 \dim {\rm span}(X_i,X_{ij},X_{ik},X_{123})=3r+2
\]
are fulfilled as well.
\end{theorem}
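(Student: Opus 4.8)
The plan is to pass to the vector-space picture, representing each $r$-plane $X_\bullet\in\bbG^d_r$ by the corresponding $(r+1)$-dimensional linear subspace of $\bbR^{d+1}$, denoted by the same symbol; then projective dimensions of spans increase by one, so the hypothesis reads $\dim(X+X_i+X_j+X_{ij})=3r+3$ and the sought conditions become $\dim(X_i+X_{ij}+X_{ik}+X_{123})=3r+3$. The first observation is that the face hypothesis forces $X_{ij}\subseteq X+X_i+X_j$, so all seven subspaces lie in the linear hull $W:={\rm span}(X,X_1,X_2,X_3)$, which by the genericity assumption is a direct sum with $\dim W=4r+4$ (the bound $d\ge 4r+3$ guarantees $W$ fits inside $\bbR^{d+1}$). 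I would then introduce the three subspaces
\[
 \Pi_i:={\rm span}(X_i,X_{ij},X_{ik}),\qquad \{i,j,k\}=\{1,2,3\}.
\]
A routine generic-position count shows $\dim\Pi_i=3r+3$, so each $\Pi_i$ has codimension $r+1$ in $W$. The key reformulation is that the required identity $\dim(X_i+X_{ij}+X_{ik}+X_{123})=3r+3$ is equivalent to the inclusion $X_{123}\subseteq\Pi_i$. Thus the theorem reduces to showing that $\Pi_1\cap\Pi_2\cap\Pi_3$ is exactly an $r$-plane: then $X_{123}$ must coincide with this intersection, and that choice satisfies all three conditions at once.

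It remains to compute $\dim(\Pi_1\cap\Pi_2\cap\Pi_3)$, which I would do in two steps. First I would show $\Pi_1+\Pi_2=W$: the sum $X_1+X_2+X_{12}$ already recovers the whole face span $X+X_1+X_2$, because $X_{12}$ meets $X_1\oplus X_2$ trivially, and adjoining $X_{13}$, whose projection onto the complementary summand $X_3$ is an isomorphism, exhausts $W$. The Grassmann formula then yields $\dim(\Pi_1\cap\Pi_2)=(3r+3)+(3r+3)-(4r+4)=2r+2$. Write $Q:=\Pi_1\cap\Pi_2$, and note $X_{12}\subseteq Q$ since $X_{12}$ lies in both $\Pi_1$ and $\Pi_2$. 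Second, the genericity assumption gives $X_{12}\cap\Pi_3=0$. As $X_{12}\subseteq Q$ meets $\Pi_3$ only in $0$, the subspaces $X_{12}$ and $Q\cap\Pi_3$ of $Q$ intersect trivially, so $(r+1)+\dim(Q\cap\Pi_3)\le\dim Q=2r+2$, i.e.\ $\dim(Q\cap\Pi_3)\le r+1$. On the other hand the Grassmann inequality in $W$ gives $\dim(Q\cap\Pi_3)\ge\dim Q+\dim\Pi_3-\dim W=r+1$. Hence $\dim(\Pi_1\cap\Pi_2\cap\Pi_3)=r+1$, i.e.\ the intersection is an $r$-plane.

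To close, setting $X_{123}:=\Pi_1\cap\Pi_2\cap\Pi_3$ produces an $r$-plane contained in each $\Pi_i$, so all three span conditions hold; and any $r$-plane $Y$ satisfying them obeys $Y\subseteq\Pi_i$ for every $i$, hence $Y\subseteq\Pi_1\cap\Pi_2\cap\Pi_3=X_{123}$, and equality of dimensions forces $Y=X_{123}$. This yields both existence and uniqueness.

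I expect the main obstacle to be the two transversality facts $\Pi_1+\Pi_2=W$ and $X_{12}\cap\Pi_3=0$. Both are intuitively ``generic'', but making them rigorous requires verifying that they are compatible with, rather than excluded by, the prescribed face degeneracies; this is precisely the content of the ``no other degeneracies'' hypothesis, since $X_{12}$ is built from the $X,X_1,X_2$ directions while $\Pi_3$ is spanned from $X_3,X_{13},X_{23}$. Everything else is bookkeeping with the Grassmann dimension formula.
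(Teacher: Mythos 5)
Your proof is correct and takes essentially the same route as the paper's: all seven subspaces are confined to the $(4r+3)$-plane (projectively) spanned by $X,X_1,X_2,X_3$, and $X_{123}$ is characterized as the triple intersection of the three $(3r+2)$-planes ${\rm span}(X_i,X_{ij},X_{ik})$, whose dimension is then computed to be $r$. The only difference is one of rigor, not of method: where the paper simply asserts the generic dimension count for pairwise and triple intersections, you derive it from the Grassmann formula, isolating the precise transversality facts ($\Pi_1+\Pi_2=W$ and $X_{12}\cap\Pi_3=0$) needed from the ``no other degeneracies'' hypothesis and obtaining the lower bound unconditionally.
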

\begin{proof}
The general position condition yields that the projective plane
$V={\rm span}(X,X_1,X_2,X_3)$ is of dimension $4r+3$. The
assumptions of the theorem imply that the $r$-planes $X_{ij}$ are
contained in the corresponding $(3r+2)$-planes ${\rm
span}(X,X_i,X_j)$, and therefore are also contained in $V$. In the
case of the general position, the planes spanned by
$X_i,X_{ij},X_{ik}$ are also $(3r+2)$-dimensional. The $r$-plane
$X_{123}$, if exists, must lie in the intersection of three such
$(3r+2)$-planes. In the $(4r+3)$-dimensional space $V$, the
dimension of a pairwise intersection is $2(3r+2)-(4r+3)=2r+1$, and
therefore the dimension of the triple intersection is
$(4r+3)-3(3r+2)+3(2r+1)=r$, as required.
\end{proof}

\begin{theorem}\label{th:Gr_4D}{\bf (Multidimensional consistency of Grassmannian
Q-nets)} The 3D system governing Grassmannian Q-nets is
4D-consistent and therefore $N$-dimensionally consistent for all
$N\ge 4$.
\end{theorem}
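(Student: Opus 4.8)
The plan is to reduce $4$D-consistency to a single assertion about an elementary $4$-cube of $\Integer^4$, whose vertices I label by subsets $S\subseteq\{1,2,3,4\}$, writing $X_S\in\bbG^d_r$ for the corresponding $r$-plane. I take as initial data the eleven planes $X$, $X_i$ and $X_{ij}$; the four planes $X_{ijk}$ are then uniquely determined by Theorem \ref{th:Gr_3D}. Consistency means that the eighth vertex $X_{1234}$, obtained by applying the $3$D system of Theorem \ref{th:Gr_3D} to any one of the four three-dimensional faces through it, comes out the same for all four choices. First I would check that every plane involved lies in the common ambient space $V={\rm span}(X,X_1,X_2,X_3,X_4)$, which for five $r$-planes in general position is a $(5r+4)$-plane: indeed $X_{ij}\subseteq{\rm span}(X,X_i,X_j)\subseteq V$ by the net condition, and $X_{ijk}\subseteq{\rm span}(X_i,X_{ij},X_{ik})\subseteq V$ by the construction in the proof of Theorem \ref{th:Gr_3D}.

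Next I would introduce, for each $i\in\{1,2,3,4\}$, the $(4r+3)$-plane $\Sigma_i\subseteq V$ spanned by the seven prescribed $r$-planes on the ``far'' three-dimensional face $\{S:i\in S\}$ in direction $i$, and let $X_{1234}^{(i)}$ denote the eighth plane produced by the $3$D system on that face. The heart of the argument is that each single determination $X_{1234}^{(i)}$ already lies in \emph{all four} of the $\Sigma_j$. For $i=4$, applying Theorem \ref{th:Gr_3D} to the face with base $X_4$ and directions $1,2,3$ gives
\begin{align*}
 X_{1234}^{(4)}\subseteq{}&{\rm span}(X_{14},X_{124},X_{134})\cap{\rm span}(X_{24},X_{124},X_{234})\\
 &\cap\,{\rm span}(X_{34},X_{134},X_{234}),
\end{align*}
and the three $(3r+2)$-planes on the right are spanned by planes whose index sets all contain $1$, $2$, $3$ respectively, hence are contained in $\Sigma_1$, $\Sigma_2$, $\Sigma_3$; together with the trivial inclusion $X_{1234}^{(4)}\subseteq\Sigma_4$ this yields $X_{1234}^{(4)}\subseteq\Sigma_1\cap\Sigma_2\cap\Sigma_3\cap\Sigma_4$. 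The identical reasoning applies to $X_{1234}^{(1)}$, $X_{1234}^{(2)}$, $X_{1234}^{(3)}$.

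Finally I would count dimensions inside $V$. Each $\Sigma_i$ has codimension $(5r+4)-(4r+3)=r+1$ in $V$, so in general position the quadruple intersection $Y=\Sigma_1\cap\Sigma_2\cap\Sigma_3\cap\Sigma_4$ has dimension $(5r+4)-4(r+1)=r$. Since every $X_{1234}^{(i)}$ is an $r$-plane contained in $Y$, each must coincide with $Y$; in particular all four determinations agree, which is precisely the $4$D-consistency, and the $N$-dimensional consistency for $N\ge4$ then follows by the standard argument that $4$D-consistency propagates to cubes of arbitrary dimension. I expect the main obstacle to be the dimension count for $Y$: the inequality $\dim Y\ge r$ is automatic since $Y$ contains an $r$-plane, but the reverse bound $\dim Y\le r$ requires the four planes $\Sigma_i$ to sit in sufficiently general position within $V$, and it is exactly here that the ``no other degeneracies'' hypothesis underlying Theorem \ref{th:Gr_3D} must be invoked and justified for generic net data.
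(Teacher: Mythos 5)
Your proposal is correct and follows essentially the same route as the paper: your $\Sigma_i$ are exactly the paper's $(4r+3)$-planes $V_i={\rm span}(X_i,X_{ij},X_{ik},X_{i\ell})$, and the conclusion rests on the same codimension count showing that $\Sigma_1\cap\Sigma_2\cap\Sigma_3\cap\Sigma_4$ is generically an $r$-plane inside the $(5r+4)$-dimensional span of $X,X_1,\dots,X_4$. The only cosmetic difference is that the paper identifies each $(3r+2)$-plane ${\rm span}(X_{ij},X_{ijk},X_{ij\ell})$ as the pairwise intersection $V_i\cap V_j$, whereas you use only the one-sided inclusions and let the dimension bound force equality.
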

\begin{proof}
One has to show that the four $r$-planes,
\[
{\rm span}(X_{12},X_{123},X_{124})\cap\,{\rm
span}(X_{13},X_{123},X_{124})\cap\,{\rm span}(X_{14},
X_{124},X_{134}),
\]
and the three others obtained by cyclic shifts of indices,
coincide. Thus, we have to prove that the six $(3r+2)$-planes
${\rm span}(X_{ij},X_{ijk},X_{ij\ell})$ intersect along a
$r$-plane. We assume that the ambient space $\bbP^d$ has dimension
$d\ge 5r+4$. Then, in general position, the plane ${\rm
span}(X,X_1,X_2,X_3,X_4)$ which contains all elements of our
construction is of dimension $5r+4$. It is easy to understand that
the $(3r+2)$-plane ${\rm span}(X_{ij},X_{ijk},X_{ij\ell})$ is the
intersection of two $(4r+3)$-planes $V_i={\rm
span}(X_{i},X_{ij},X_{ik},X_{i\ell})$ and $V_j={\rm
span}(X_{j},X_{ij},X_{jk},X_{j\ell})$. Indeed, the plane $V_i$
contains also $X_{ijk}$, $X_{ij\ell}$, and $X_{ik\ell}$.
Therefore, both $V_i$ and $V_j$ contain the three $r$-planes
$X_{ij}$, $X_{ijk}$ and $X_{ij\ell}$, which determine the
$(3r+2)$-plane ${\rm span}(X_{ij},X_{ijk},X_{ij\ell})$. Now the
intersection in question can be alternatively described as the
intersection of the four $(4r+3)$-planes $V_1$, $V_2$, $V_3$,
$V_4$ of one and the same $(5r+4)$-dimensional space. This
intersection is generically an $r$-plane.
\end{proof}

\section{Analytical description: noncommutative Q-nets}\label{s:noncom}

Here we give an analytical description of Grassmannian Q-nets. In
the case of ordinary Q-nets (of rank $r=0$), the planarity
condition is written in affine coordinates as
\begin{equation}\label{xijack}
 x_{ij}=x+a^{ij}(x_i-x)+a^{ji}(x_j-x),
\end{equation}
where the scalar coefficients $a^{ij},a^{ji}$ are naturally
assigned to the corresponding elementary squares of $\Integer^N$
(parallel to the coordinate plane $(ij)$). Consistency of these
equations around an elementary cube (Theorem \ref{th:Gr_3D})
yields a mapping
\[
 (a^{12},a^{21},a^{13},a^{31},a^{23},a^{32})\mapsto
 (a^{12}_3,a^{21}_3,a^{13}_2,a^{31}_2,a^{23}_1,a^{32}_1).
\]
This mapping can be rewritten in a rather nice form in terms of so
called rotation coefficients. The same approach works in the case
$r>0$ as well, with the only difference that now we have to assume
that the coefficients $a^{ij}$ are noncommutative.

In order to demonstrate this we use the interpretation of the
Grassmannian $\bbG^d_r$ as the variety $G^{d+1}_{r+1}$ of all
$(r+1)$-dimensional subspaces of the vector space $\bbR^{d+1}$.
One can represent an $(r+1)$-dimensional subspace $X$ of
$\bbR^{d+1}$ by a $(r+1)\times(d+1)$ matrix $x$ whose rows contain
vectors of some basis of $X$. The change of basis of $X$
corresponds to a left multiplication of $x$ by an element of
$GL_{r+1}$. Thus, one gets the isomorphism $G^{d+1}_{r+1}\equiv
(\bbR^{d+1})^{r+1}/GL_{r+1}$.

The condition that the $(r+1)$-dimensional vector subspace
$X_{12}$ belongs to the $3(r+1)$-dimensional vector space spanned
by $X,X_1,X_2$ is now expressed by an equation
\[
 x_{12}=ax+bx_1+cx_2,\quad a,b,c\in\Mat(r+1,r+1),
\]
The set of coefficients $a,b,c$ is abundant since it contains
$3(r+1)^2$ parameters while $\dim G^{3r+3}_{r+1}=2(r+1)^2$. In
order to get rid of this abundance we adopt an ``affine''
normalization of the representatives $x$, analogous to the case
$r=0$. Namely, the representative of a generic subspace can be
chosen, by applying the left multiplication by a suitable matrix,
in the form
\begin{equation}\label{xaff}
 x=\begin{pmatrix}
  x^{1,1}   &\dots & x^{1,d-r}   & 1 & \dots  & 0\\
  \vdots    &      & \vdots      &   & \ddots & \\
  x^{r+1,1} &\dots & x^{r+1,d-r} & 0 & \dots  & 1
 \end{pmatrix}
\end{equation}
with the unit matrix $I$ in the last $r+1$ columns. Under this
normalization the coefficients in the equation
$x_{12}=ax+bx_1+cx_2$ obey the relation $I=a+b+c$, and we come to
the equation of the form (\ref{xijack}).

The calculation of the consistency conditions of the equations
(\ref{xijack}) remains rather simple in the noncommutative setup. One of
three ways of getting vector $x_{ijk}$ is
\[
 x_{ijk}=x_k+a^{ij}_k(x_{ik}-x_k)+a^{ji}_k(x_{jk}-x_k).
\]
Note that after we substitute $x_{ik}$ and $x_{jk}$ from
(\ref{xijack}), the matrix $x_i$ enters the right hand side only
once with the coefficient $a^{ij}_ka^{ik}$. Therefore,
alternating of $j$ and $k$ yields the relation
\begin{equation}\label{aa}
 a^{ij}_ka^{ik}=a^{ik}_ja^{ij}.
\end{equation}
Analysis of relations \eqref{aa} is based on the following
statement.
\begin{lemma}\label{lemma closed}
{\bf (Integration of closed multiplicative matrix-valued one-form)}
Let the $GL_{r+1}$-valued functions $a^j$ be defined on edges of
$\Integer^N$ parallel to the $j$-th coordinate axis (so that
$a^j(n)$ is assigned to the edge $[n,n+e_j]$, where $e_j$ is the
unit vector of the $j$-th coordinate axis). If $a^j$ satisfy
\begin{equation}\label{eq:closed}
 a^j(n+e_k)a^k(n)=a^k(n+e_j)a^j(n),
\end{equation}
then there exists a $GL_{r+1}$-valued function $h$ defined on
vertices of $\Integer^N$ such that
\begin{equation}\label{eq:factorized}
 a^j(n)=h(n+e_j)h^{-1}(n),\quad 1\le j\le N.
\end{equation}
\end{lemma}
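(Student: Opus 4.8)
The plan is to treat \eqref{eq:closed} as a discrete zero-curvature (flatness) condition and to recover $h$ by a multiplicative analogue of the Poincar\'e lemma, i.e.\ by ``integrating'' the form $a^j$ along lattice paths. Rewriting the desired identity \eqref{eq:factorized} as $h(n+e_j)=a^j(n)\,h(n)$, I would fix a base vertex, say the origin, normalize $h(0)=I$, and then define $h$ at an arbitrary vertex $n$ by transporting this value along any lattice path $0=p_0,p_1,\dots,p_L=n$ whose consecutive vertices differ by $\pm e_j$: at a forward step $p_{s+1}=p_s+e_j$ one multiplies on the left by $a^j(p_s)$, and at a backward step $p_{s+1}=p_s-e_j$ one multiplies by $a^j(p_{s+1})^{-1}$, which is legitimate since each $a^j$ takes values in $GL_{r+1}$.

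The heart of the matter is to show that this prescription does not depend on the chosen path, so that $h$ is well defined on vertices. Since $\Integer^N$ is simply connected, every closed lattice loop is a composition of elementary square loops, and hence it suffices to check that the transport around a single elementary plaquette in a coordinate plane $(jk)$ is trivial. Comparing the two ways of passing from $n$ to $n+e_j+e_k$ yields the products $a^k(n+e_j)a^j(n)$ and $a^j(n+e_k)a^k(n)$, and these coincide precisely by the closedness assumption \eqref{eq:closed}. Once path-independence is established, the factorization \eqref{eq:factorized} holds by the very construction (take a path ending with the step $n\to n+e_j$), and $h$ is automatically $GL_{r+1}$-valued as a product of invertible matrices.

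The step I expect to demand the most care is the path-independence argument in the noncommutative setting: unlike the scalar case one cannot simply add increments, so the order of the matrix factors must be tracked throughout, and the reduction of an arbitrary loop to elementary plaquettes has to be realized as an explicit sequence of elementary moves, each move exchanging two adjacent steps of the path and invoking \eqref{eq:closed} (or a consequence obtained by left/right multiplication by suitable inverses) in the correct order. An equivalent and perhaps more economical route, which avoids general loops altogether, is to build $h$ inductively over the coordinate subspaces: define it first along the $e_1$-axis, then extend successively to the span of $e_1,\dots,e_m$ for $m=2,\dots,N$ by integrating $a^m$ in the new direction, using \eqref{eq:closed} at each stage to verify that the newly assigned values are consistent with those already fixed on the lower-dimensional slice.
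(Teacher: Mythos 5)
Your proposal is correct and follows essentially the same route as the paper's own proof: prescribe $h$ at a base vertex, extend it along lattice paths via $h(n+e_j)=a^j(n)h(n)$, and establish path-independence by reducing the comparison of any two paths to elementary flips across coordinate plaquettes, where the closedness condition \eqref{eq:closed} gives exactly the required equality $a^k(n+e_j)a^j(n)=a^j(n+e_k)a^k(n)$. The paper phrases the reduction as elementary flips of paths rather than decomposition of loops, but this is the same combinatorial argument, and your attention to the order of the noncommutative factors matches what the closedness condition delivers.
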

\begin{proof}
Prescribe $h(0)$ arbitrarily. In order to extend $h$ to any point
of $\Integer^N$, connect it to $0$ by a lattice path
$(\ee_1,\ldots,\ee_M)$, where the endpoint of any edge $\ee_m$
coincides with the initial point of the edge $\ee_{m+1}$. Extend
$h$ along the path according to \eqref{eq:factorized}. This
extension does not depend on the choice of the path. Indeed, any
two lattice paths connecting any two points can be transformed
into one another by means of elementary flips exchanging two edges
$[n,n+e_j]$, $[n+e_j,n+e_j+e_k]$ to the two edges $[n,n+e_k]$,
$[n+e_k,n+e_j+e_k]$. The value of $h$ at the common points $n$ and
$n+e_j+e_k$ of such two paths remain unchanged under the flip, as
follows from the ``closedness condition'' \eqref{eq:closed}.
\end{proof}

Equations \eqref{aa} together with Lemma \ref{lemma closed} yield
existence of matrices $h^i$ (assigned to edges of $\Integer^N$
parallel to the $i$-th coordinate axis) such that
$a^{ij}=h^i_j(h^i)^{-1}$. They are called the {\em discrete Lam\'e
coefficients}. Equation (\ref{xijack}) takes the form
\[
 x_{ij}=x+h^i_j(h^i)^{-1}(x_i-x)+h^j_i(h^j)^{-1}(x_j-x).
\]
Let us introduce the new variable $y^i$ (also assigned to edges
parallel to the $i$-th coordinate axis) by the formula
$x_i-x=h^iy^i$. Then $x_{ij}=x+h^i_jy^i+h^j_iy^j$, and, on the
other hand, $x_{ij}=x_j+h^i_jy^i_j=x+h^jy^j+h^i_jy^i_j$. This
allows to rewrite the equation (\ref{xijack}) finally as
\begin{equation}\label{yij}
 y^i_j=y^i-b^{ij}y^j.
\end{equation}
The matrices
\begin{equation}\label{bij}
b^{ij}=(h^i_j)^{-1}(h^j_i-h^j)
\end{equation}
(assigned to the elementary squares parallel to the $(ij)$-th
coordinate plane) are called the {\em discrete rotation
coefficients}. The compatibility conditions in terms of these
coefficients are perfectly simple. We have
\[
 y^i_{jk}=y^i+b^{ik}y^k+b^{ij}_k(y^j+b^{jk}y^k)
  =y^i+b^{ij}y^j+b^{ik}_j(y^k+b^{kj}y^j)
\]
which leads to the coupled equations
\[
  b^{ij}_k-b^{ik}_jb^{kj}=b^{ij},\quad
  -b^{ij}_kb^{jk}+b^{ik}_j=b^{ik}.
\]
These can be solved for $b^{ij}_k$ to give an explicit map.

\begin{theorem}\label{th:bij}{\bf (Grassmannian Q-nets are
described by the noncommutative discrete Darboux system)} Rotation
coefficients $b^{ij}\in\Mat(r+1,r+1)$ of Q-nets in the
Grassmannian $\bbG^d_r$ satisfy the noncommutative discrete
Darboux system
\begin{equation}\label{bij map}
 b^{ij}_k=(b^{ij}+b^{ik}b^{kj})(I-b^{jk}b^{kj})^{-1}, \quad
k\neq i\neq j\neq k.
\end{equation}
This map is multidimensionally consistent.
\end{theorem}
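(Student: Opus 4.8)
The plan is to treat the statement as two separate claims: first that the coupled relations already obtained just above the theorem can be solved to give the explicit map \eqref{bij map}, and second that this map is 4D-consistent. The first claim is a short noncommutative elimination. Starting from
\[
  b^{ij}_k-b^{ik}_jb^{kj}=b^{ij},\quad
  -b^{ij}_kb^{jk}+b^{ik}_j=b^{ik},
\]
I would solve the second relation for $b^{ik}_j=b^{ik}+b^{ij}_kb^{jk}$ and substitute it into the first, obtaining $b^{ij}_k=b^{ij}+b^{ik}b^{kj}+b^{ij}_k\,b^{jk}b^{kj}$. Collecting the two occurrences of the unknown $b^{ij}_k$ on the left and factoring out the \emph{right} factor $(I-b^{jk}b^{kj})$ yields \eqref{bij map} after multiplying by its inverse. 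The only care needed is to preserve the order of the noncommutative factors and to note that genericity guarantees invertibility of $I-b^{jk}b^{kj}$.

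For the consistency I would not verify the four-dimensional compatibility of \eqref{bij map} by brute computation but deduce it from the already-established geometric result. Fix a global representative $h^i$ by integrating Lemma \ref{lemma closed} once and for all, so that the rotation coefficients \eqref{bij} are unambiguously defined at every vertex of $\Integer^N$. The map \eqref{bij map} then coincides with the genuine shift $b^{ij}\mapsto b^{ij}_k$ of these coefficients along the underlying Grassmannian Q-net. Because that net is 4D-consistent by Theorem \ref{th:Gr_4D}, the eighth $r$-plane of each $3$-cube, and hence all the derived quantities $a^{ij}$, $h^i$, $b^{ij}$ inside a $4$-cube, are determined independently of the order in which the 3D faces are filled in. Consequently the two evaluations of $b^{ij}_{k\ell}$ obtained by applying \eqref{bij map} in the orders $k,\ell$ and $\ell,k$ must agree, which is exactly the asserted 4D-consistency; the extension to all $N\ge4$ is immediate, as in Theorem \ref{th:Gr_4D}.

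The step I expect to be most delicate is the reconstruction implicit in this reduction: to phrase 4D-consistency as a property of the map alone, one must know that generic matrix data $b^{ij}$ prescribed on the initial 3D faces actually arise from some Grassmannian Q-net, i.e. that \eqref{yij} together with \eqref{bij} can be integrated back to vectors $y^i$, frames $h^i$ and a net $x$. One should also record that the map is covariant under the residual gauge $b^{ij}\mapsto (C^i)^{-1}b^{ij}C^j$ coming from the freedom $h^i\mapsto h^iC^i$ in Lemma \ref{lemma closed}, which is a short check. If one prefers to avoid the reconstruction, the fallback is a direct algebraic verification: apply \eqref{bij map} twice to compute $b^{ij}_{k\ell}$ in both orders and match the results term by term, where the only real difficulty is the bookkeeping of noncommutative products and of the inverse factors $(I-b^{jk}b^{kj})^{-1}$.
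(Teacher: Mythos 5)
Your proposal follows essentially the same route as the paper: the paper obtains the identical coupled relations and simply states they ``can be solved for $b^{ij}_k$'' (your elimination, keeping the right factor $(I-b^{jk}b^{kj})$, is exactly that computation), and it likewise disposes of consistency in one line as ``a corollary of Theorem \ref{th:Gr_4D},'' noting a direct verification is also possible. Your additional attention to the reconstruction of a net from generic rotation coefficients and to the gauge freedom $b^{ij}\mapsto (C^i)^{-1}b^{ij}C^j$ fills in details the paper leaves implicit, but it is the same argument, correctly executed.
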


Consistency is a corollary of Theorem \ref{th:Gr_4D}, but it is
also not too difficult to prove it directly.


\section{Grassmannian Darboux nets}

\begin{definition}{\bf (Grassmannian Darboux net)}
A Grassmannian Darboux net (of rank $r$) is a map
$E(\Integer^N)\to \bbG^{d}_{r}$ defined on edges of the regular
square lattice, such that for every elementary quadrilateral of
$\Integer^N$ the four $r$-planes corresponding to its sides lie in
a $(2r+1)$-plane.
\end{definition}

In particular, for $r=0$ one arrives at the notion of Darboux nets
introduced in \cite{Schief_2003}: the four points corresponding to
the sides of every elementary square are required to be collinear.

\begin{figure}[htbp]
\setlength{\unitlength}{0.05em}
\begin{picture}(200,240)(-50,-50)
 \put(75,0){\circle*{8}}
 \put(25,25){\circle*{8}}
 \put(0,75){\circle*{8}}
 \put(200,125){\circle*{8}}
 \put(175,175){\circle*{8}}
 \put(125,200){\circle*{8}}
 \put(150,75){\circle{8}}
 \put(125,50){\circle{8}}
 \put(50,125){\circle{8}}
 \put(75,150){\circle{8}}
 \put(25,175){\circle{8}}
 \put(175,25){\circle{8}}
 \path(0,0)(150,0)
 \path(0,0)(0,150)
 \path(0,150)(22.2,172.2)\path(27.8,177.8)(50,200)
 \path(150,0)(150,71)\path(150,79)(150,150)
 \path(0,150)(71,150)\path(79,150)(150,150)
 \path(150,150)(200,200)(50,200)
 \path(200,200)(200,50)
 \path(150,0)(172.2,22.2)\path(177.8,27.8)(200,50)
 \dashline[+30]{10}(0,0)(50,50)
 \dashline[+30]{10}(50,50)(50,121)
 \dashline[+30]{10}(50,129)(50,200)
 \dashline[+30]{10}(50,50)(121,50)
 \dashline[+30]{10}(129,50)(200,50)
 \put(75,9){$X^1$} \put(38,20){$X^2$} \put(-30,75){$X^3$}
 \put(130,172){$X_{13}^2$}
 \put(110,212){$X_{23}^1$}
 \put(210,125){$X_{12}^3$}
 \put(110,27){$X_2^1$}  \put(183,15){$X_1^2$}
 \put(-15,180){$X_3^2$} \put(11,115){$X_2^3$}
 \put(159,75){$X_1^3$}  \put(75,130){$X_3^1$}
\end{picture}\hspace{1cm}
 \includegraphics[width=0.65\textwidth]{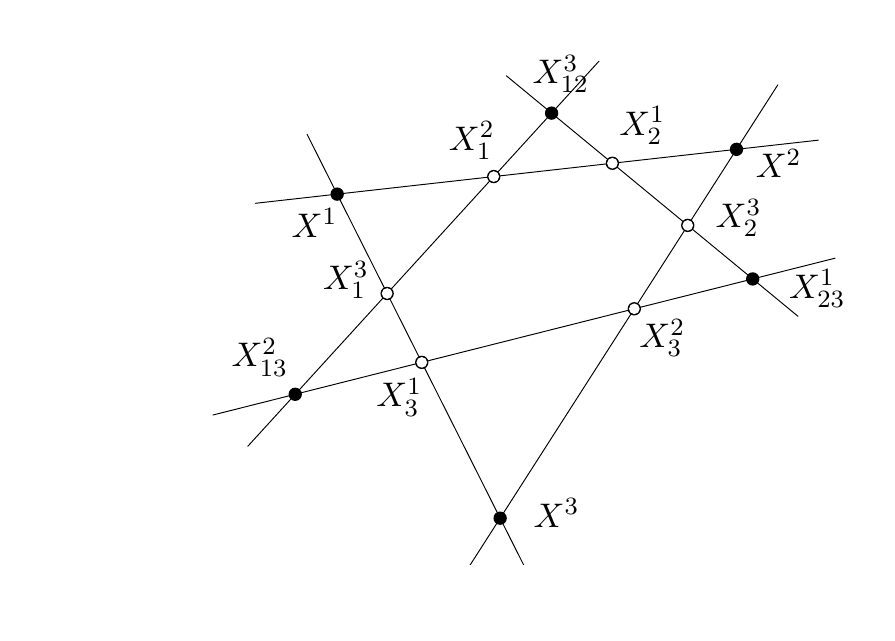}
\caption{Combinatorics (left) and geometry (right) of an
elementary cube of a Darboux net: black circles mark the initial
data; white circles mark the data uniquely determined by the
initial data. In Grassmannian Darboux nets the edges carry
$r$-planes; four $r$-planes corresponding to an elementary square
span a $(2r+1)$-plane; twelve $r$-planes assigned to the edges of
an elementary 3-cube span a $(3r+2)$-plane in
$\bbP^d$.}\label{Fig: schief system}
\end{figure}

We will denote by $X^i$ the $r$-planes assigned to the edges of
$\Integer^N$ parallel to the $i$-th coordinate axis; the
subscripts will be still reserved for the shift operation.

To find an analytical description of Grassmannian Darboux nets, we
continue to work with the ``affine'' representatives from
$G^{d+1}_{r+1}$ normalized as in \eqref{xaff}. The defining
property yields:
\[
  x^i_j=r^{ij}x^i+(I-r^{ij})x^j.
\]
Hence
\[
 x^i_{jk}=r^{ij}_k(r^{ik}x^i+(I-r^{ik})x^k)
  +(I-r^{ij}_k)(r^{jk}x^j+(I-r^{jk})x^k),
\]
and therefore
\[
 r^{ij}_kr^{ik}=r^{ik}_jr^{ij}.
\]
Comparing with \eqref{aa} and using Lemma \ref{lemma closed}, we
conclude that $r^{ij}=s^i_j(s^i)^{-1}$. Set $y^i=(s^i)^{-1}x^i$,
then the linear problem takes the form \eqref{yij} with the
rotation coefficients
\[
 b^{ij}=((s^i)^{-1}-(s^i_j)^{-1})s^j.
\]
Thus, we come to the conclusion that Grassmannian Darboux nets are
described by the same noncommutative discrete Darboux system
\eqref{bij map} as Grassmannian Q-nets, with rotation coefficients
$b^{ij}\in\Mat(r+1,r+1)$ defined by the last formula. Of course,
this is not a coincidence, since Q-nets and Darboux nets are
closely related. Indeed, considering an intersection of a
Grassmannian Q-net in $\P^d$ with some plane $\Pi$ of codimension
$r+1$, one will find a Grassmannian Darboux net in $\Pi$.
Conversely, any Grassmannian Darboux net can be extended
(non-uniquely) to a Grassmannian Q-net. This is analogous to the
case of ordinary nets (of rank $r=0$) explained in
\cite[p.76]{Bobenko_Suris_2005}.

\paragraph{Acknowledgements.} The research of V.A. is supported by the
DFG Research Unit ``Polyhedral Surfaces'' and by RFBR grants
08-01-00453 and NSh-3472.2008.2. The research of A.B. is partially
supported by the DFG Research Unit ``Polyhedral Surfaces''.


\end{document}